\title{Unidimensional factor models imply weaker partial correlations than zero-order correlations.
}
\newtheorem*{assump}{Assumptions}
\newtheorem*{prop*}{Proposition}
\newcommand*{\LargerCdot}{\raisebox{-0.25ex}{\scalebox{1.5}{$\cdot$}}}
\let\OLDthebibliography\thebibliography
\renewcommand\thebibliography[1]{
  \OLDthebibliography{#1}
  \setlength{\parskip}{5pt}
  \setlength{\itemsep}{5pt plus 0.3ex}
}
\begin{document}
\author{Riet van Bork, Raoul P. P. P. Grasman, and Lourens J. Waldorp}
\affil{University of Amsterdam}
\maketitle 
\pagestyle{fancy}
\begin{abstract}
In a unidimensional factor model it is assumed that the set of indicators that loads on this factor are conditionally independent given the latent factor. Two indicators are, however, never conditionally independent given (a set of) other indicators that load on this factor, as this would require one of the indicators that is conditioned on to correlate one with the latent factor. Although partial correlations between two indicators given the other indicators can thus never equal zero \citep{holland1986conditional}, we show in this paper that the partial correlations do need to be always weaker than the zero-order correlations.
More precisely, we prove that the partial correlation between two observed variables that load on one factor given all other observed variables that load on this factor, is always closer to zero than the zero-order correlation between these two variables. 
\end{abstract}

The unidimensional factor model plays an important role in, among others, psychometrics, educational measurement and sociology. In these disciplines the unidimensional factor model is used to measure a construct by assuming that the shared variance among a set of indicators reflects its common cause. For example, by assuming that a set of IQ items is caused by intelligence, one can use the shared variance among the testscores on these items as an estimate of intelligence. So far multiple implications of the unidimensional factor model have been put forward that can be used to evaluate the fit of a factor model to observed data. Probably the most fundamental condition of the unidimensional factor model is that the indicators are conditionally independent given the latent factor. This condition is sometimes called \emph{latent conditional independence} \citep{holland1986conditional} but is more widely known as \emph{local independence} in item response theory \citep{lord1980applications}. Another important implication of the unidimensional factor model is that the model implied cavariance matrix, which is a matrix of rank one, implies that some so-called tetrads equal zero.  A tetrad refers to the difference between the products of two pairs of covariances among four random variables \citep{bollen1993confirmatory}. The tetrads that equal zero are called vanishing tetrads and by testing whether the tetrads in a sample covariance matrix differ from zero, this implication of factor models can be used to evaluate model misspecification. In this paper we define another implication of the unidimensional factor model: the partial correlation between two indicators given all other indicators is always closer to zero than the zero-order correlation between these indicators.

\section{Unidimensional factor models}

We first define unidimensional factor models before providing a proof for the implication of unidimensional factor models that partial correlations are always weaker than their corresponding zero-order correlations. Let $\Sigma$ denote the covariance matrix of $\mathbf{y}$, in which  $\mathbf{y}$ denotes the vector of observed variables. We assume that $\Sigma$ is nondegenerate. Let $\lambda$ denote the vector of factor loadings and $\Theta$ the residual covariance matrix. In a unidimensional factor model all observed variables in $\mathbf{y}$ are a linear function of the same factor, $\eta$, and independent residuals, $\varepsilon$:
\begin{equation}
y_i = \lambda_{i}\eta +\varepsilon_i.
\end{equation}
We assume var($\eta$) = 1. We also assume independent residuals, that is, $\forall i\ne j(\varepsilon_i \perp \varepsilon_j)$ 
which implies that $\Theta$ is diagonal. In the following, we assume that standardized factor loadings range from -1 to 1 but are never exactly -1 or 1, as this would imply that the common factor is observed. We also assume that factor loadings are not exactly zero, as this would imply that the corresponding observed variable is not an indicator of the common factor. Put differently, we assume $\forall i((-1<\lambda_i<0) \lor (0<\lambda_i<1))$. The model implied covariance matrix of the indicators is a function of the factor loadings and the residual covariance matrix:
\begin{equation}
\Sigma = \lambda\lambda' +\Theta.\label{eq:UFMSigma}
\end{equation}
Equation (\ref{eq:UFMSigma}) implies that the covariance among observed variables is a function of their factor loadings. More precisely, because $\Theta$ is a diagonal matrix, the covariance between two variables $y_i$ and $y_j$ equals $\lambda_{i}\lambda_{j}$, in which $\lambda_{i}$ and $\lambda_{j}$ are elements of the vector $\lambda$ and denote the factor loadings of $y_i$ and $y_j$ on the factor $\eta$.

Consider three variables $y_i$, $y_j$ and $y_z$. The partial correlation between $y_i$ and $y_j$ given $y_z$ is defined as follows \citep{chen2014graphical}:

\begin{equation}
\rho_{ij\LargerCdot z} = \frac{\rho_{ij}-\rho_{iz}\rho_{jz}}{\sqrt{(1-\rho^2_{iz})(1-\rho^2_{jz})}}\label{eq:pcor}
\end{equation}
Some structures of three correlations imply that the partial correlation is stronger than the correlation. For example when $\rho_{jz}$ is negative while the other two correlations are positive, results in a partial correlation $\rho_{ij.z}$ that is stronger than the zero order correlation $\rho_{ij}$. \citet{langford2001property} showed that the property of being positively correlated is not transitive and thus for three variables it is possible to have a correlational structure with one negative and two positive correlations. However, a structure with one negative and two positive correlations is not possible under a unidimensional factor model as it is impossible to choose factor loadings such that they result in two positive correlations and one negative correlation. In fact all possible correlational structures that result in some partial correlations that are stronger than their corresponding zero-order correlation appear to be impossible under a unidimensional factor model. For example, for three variables $y_1$, $y_2$ and $z$ we can substitute each correlation in equation (\ref{eq:pcor}) with factor loadings:
\begin{equation}\label{pcor3}
\rho_{ij\LargerCdot z}=\frac{(\lambda_{y_1}\lambda_{y_2})- (\lambda_{y_1}\lambda_z)(\lambda_{y_2}\lambda_z)}{\sqrt{ 1-(\lambda_{y_1}\lambda_z)^2}\sqrt{ 1-(\lambda_{y_2}\lambda_z)^2}}=\frac{\lambda_{y_1}\lambda_{y_2}- \lambda_{y_1}\lambda_{y_2}\lambda_z^2}{\sqrt{ 1-\lambda_{y_1}^2\lambda_z^2}\sqrt{ 1-\lambda_{y_2}^2\lambda_z^2}}
\end{equation}
Since $\lambda_z^2$ is a number between zero and one, the partial correlation $\rho_{ij\LargerCdot z}$ is weaker than the zero-order correlation $\lambda_{y_1}\lambda_{y_2}$. The following section provides a proof of the implication of unidimensional factor models that all partial correlations between indicators are necessarily weaker than the zero-order correlations between these indicators.

\section{Weaker partial correlations than zero-order correlations}
We start with providing the assumptions defined in the the previous section.
\begin{assump}\begin{minipage}[t]{\linewidth}
\begin{enumerate}
\item $\Sigma$ is nondegenerate.
\item Residuals are independent, i.e., $\forall i\ne j(\varepsilon_i \perp \varepsilon_j)$.
\item  The variance of $\eta$ equals one, i.e., var($\eta$)=1.
\item  All standardized factor loadings do not equal zero, one or\\ minus one, i.e., $\forall i((-1<\lambda_i<0) \lor (0<\lambda_i<1))$.
\end{enumerate}
\end{minipage}
\end{assump}
\noindent Based on these assumptions we define the following proposition that we prove in this paper.
\begin{prop*}
Assume 1 to 4 above. For a set of $p$ Gaussian random variables that load on one common factor, the partial correlation between two of these variables while the other $p-2$ variables are partialled out, is closer to zero than the zero-order correlation between the two variables.
\end{prop*}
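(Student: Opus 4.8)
The plan is to route the whole argument through the precision matrix $K=\Sigma^{-1}$, using the standard identity that the partial correlation of $y_i$ and $y_j$ given all remaining $p-2$ variables equals $\rho_{ij\cdot\mathrm{rest}}=-k_{ij}/\sqrt{k_{ii}k_{jj}}$, where $k_{ab}$ is the $(a,b)$ entry of $K$. The structural fact I would exploit is that, by Equation (\ref{eq:UFMSigma}), $\Sigma=\lambda\lambda'+\Theta$ is a rank-one perturbation of the diagonal matrix $\Theta$, which is invertible because each $\theta_{ii}=1-\lambda_i^2>0$ under Assumption 4. Hence $K$ is available in closed form via the Sherman--Morrison formula, and every quantity I need can be written in terms of the loadings alone.

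First I would apply Sherman--Morrison to obtain
\[
K=\Theta^{-1}-\frac{\Theta^{-1}\lambda\lambda'\Theta^{-1}}{1+\lambda'\Theta^{-1}\lambda}.
\]
Writing $d=\lambda'\Theta^{-1}\lambda=\sum_k \lambda_k^2/\theta_{kk}$, the off-diagonal entries are $k_{ij}=-\lambda_i\lambda_j/[\theta_{ii}\theta_{jj}(1+d)]$ and the diagonal entries are $k_{ii}=s_i/[\theta_{ii}(1+d)]$ with $s_i:=1+d-\lambda_i^2/\theta_{ii}=1+\sum_{k\ne i}\lambda_k^2/\theta_{kk}>0$. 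Substituting into the precision-matrix identity and using $\rho_{ij}=\lambda_i\lambda_j$, the factor $1+d$ and the loadings cancel, and the ratio of the partial to the zero-order correlation collapses to
\[
\frac{|\rho_{ij\cdot\mathrm{rest}}|}{|\rho_{ij}|}=\frac{1}{\sqrt{(\theta_{ii}s_i)(\theta_{jj}s_j)}}.
\]
Since $s_i,s_j>0$, the sign is preserved, so ``closer to zero'' reduces to the single scalar inequality $(\theta_{ii}s_i)(\theta_{jj}s_j)>1$.

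The step I expect to be the main obstacle is exactly this inequality, because the natural per-variable bound $\theta_{ii}s_i>1$ is \emph{false}: a single large loading $\lambda_i$ can push $\theta_{ii}s_i$ below $1$, so the two factors must be handled jointly rather than separately. The device I would use is the substitution $g_i:=\lambda_i^2/(1-\lambda_i^2)$, under which $\theta_{ii}=(1+g_i)^{-1}$ and $\theta_{ii}s_i=(D-g_i)/(1+g_i)$ with $D:=1+\sum_k g_k$. The target then becomes $(D-g_i)(D-g_j)>(1+g_i)(1+g_j)$, and expanding both sides the difference factors cleanly as
\[
(D-g_i)(D-g_j)-(1+g_i)(1+g_j)=(D+1)\Bigl(D-1-g_i-g_j\Bigr)=(D+1)\sum_{k\ne i,j}g_k.
\]

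To finish, Assumption 4 forces every $g_k>0$, and partialling out the other $p-2$ variables presupposes $p\ge 3$, so the leftover sum $\sum_{k\ne i,j}g_k$ is nonempty and strictly positive; hence the product exceeds $1$ and the partial correlation is strictly closer to zero than the zero-order correlation. I would close by noting that Gaussianity is not needed for the inequality itself, which is a statement about $\Sigma$ alone; it serves only to identify a vanishing partial correlation with conditional independence, and that the three-variable computation in Equation (\ref{pcor3}) is recovered as the special case in which this leftover sum has a single term.
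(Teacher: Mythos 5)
Your proof is correct, and it takes a genuinely different route from the paper's. The paper partials out $Z$ explicitly via the Schur complement $\Sigma_{YY}-\Sigma_{YZ}\Sigma_{ZZ}^{-1}\Sigma_{ZY}$, collapses the entire influence of the conditioning set into the single scalar $\mathbf{z}^{T}\Sigma_{ZZ}^{-1}\mathbf{z}$, proves this scalar lies strictly between $0$ and $1$ by applying Sherman--Morrison to $\Sigma_{ZZ}=\mathbf{z}\mathbf{z}^{T}+A$, and then compares numerator and denominator of the resulting expression, treating the positive and negative sign cases separately. You instead invert the full matrix $\Sigma=\lambda\lambda'+\Theta$ by Sherman--Morrison and read the partial correlation off the precision matrix; your intermediate formulas check out (the entries $k_{ij}$ and $k_{ii}$, the cancellation of $1+d$, and the factorization $(D-g_i)(D-g_j)-(1+g_i)(1+g_j)=(D+1)\sum_{k\neq i,j}g_k$ are all correct, and the result agrees numerically with the paper's formula in the $p=3$ case). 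What your route buys is an exact closed form for the attenuation: the ratio of partial to zero-order correlation equals $1/\sqrt{(\theta_{ii}s_i)(\theta_{jj}s_j)}$, a single positive number, so sign preservation is automatic and both sign cases are handled at once; moreover the identity $(D+1)\sum_{k\neq i,j}g_k>0$ makes transparent that strictness comes precisely from the existence of at least one conditioning variable with a nonzero loading. Your observation that the per-variable bound $\theta_{ii}s_i>1$ can fail when one loading is large, so that the two factors must be bounded jointly, is the one genuinely delicate step, and you resolve it correctly. What the paper's route buys in exchange is that it directly generalizes the three-variable computation in equation (\ref{pcor3}) and isolates the conditioning set's contribution in one interpretable scalar. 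Two minor notes: your symbol $K$ for the precision matrix collides with the paper's later use of $K$ for the scalar $\mathbf{z}^{T}A^{-1}\mathbf{z}/(1+\mathbf{z}^{T}A^{-1}\mathbf{z})$, and your closing remark that Gaussianity is inessential to the inequality itself (it only matters for equating zero partial correlation with conditional independence) is correct and applies equally to the paper's argument.
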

\begin{proof}
We first prove that the proposition holds for positive correlations and then prove that the above proposition also holds for negative correlations. This means that for positive correlations we show that the partial correlation is smaller than the zero-order correlation and for negative correlations we show that the partial correlation is larger than the zero-order correlation.

We use the following formula\footnote{Equation (\ref{general}) is an application of the well known Schur complement \citep{zhang2006schur}; the conditional covariance of $Y$ given $Z$ (i.e., $\Sigma_{YY\bullet Z}$) is the Schur complement of $\Sigma_{ZZ}$.} for the partial covariance matrix \citep{johnson1998applied}:
\begin{equation}\label{general}
\Sigma_{YY\bullet Z}= \Sigma_{YY} - \Sigma_{YZ} \Sigma_{ZZ}^{-1} \Sigma_{ZY}
\end{equation}
$Y=[y_1,y_2]^T$ in which $y_1$ and $y_2$ denote two arbitrary variables from the set of $p$ observed variables that load on the common factor. $Z=[z_1, z_2,\hdots, z_{p-2}]^T$, denoting all other $p-2$ variables that load on the same common factor as $Y$. Since we assume that all variables load on a common factor we can define all of these matrices in terms of the factor loadings. After all, the unidimensional factor model implies $\rho_{y_1y_2} = \lambda_{y_1}\lambda_{y_2}$.
\begin{equation}
\Sigma_{YY}=\begin{bmatrix}
       1  & \lambda_{y_1}\lambda_{y_2}           \\[0.3em]
       \lambda_{y_1}\lambda_{y_2} & 1 
     \end{bmatrix}
\end{equation}
Let $\textbf{z}$ denote the vector of factor loadings of the $p-2$ variables that are partialled out $[\lambda_{z_1}, \lambda_{z_2},\hdots, \lambda_{z_{p-2}}]^T $. 
\begin{equation}
\Sigma_{ZY}= (\lambda_{y_1},\lambda_{y_2})^T\textbf{z}^T = \Sigma_{YZ}^T
\end{equation}
\begin{equation}
\Sigma_{ZZ}=\textbf{z}\textbf{z}^T + A
\end{equation}
in which $A$ is a diagonal matrix with the residual variances of $Z$ on the diagonal (i.e., $A=\text{diag}(1-\lambda_{z_1}^2, 1-\lambda_{z_2}^2,\hdots, 1-\lambda_{z_{p-2}}^2)$). As such $\Sigma_{ZZ}$ is the correlation matrix of the variables that are partialled out.
 
Note that in equation (\ref{general}) $\Sigma_{YY\bullet Z}$ is the $2\times2$ partial covariance matrix of $y_1$ and $y_2$ and needs to be standardized to obtain the partial correlation matrix. The partial correlation between $y_1$ and $y_2$ is thus the off-diagonal element of $\Sigma_{YY\bullet Z}$ divided by the square root of the diagonal elements of $\Sigma_{YY\bullet Z}$.
From this it follows that the partial correlation between $y_1$ and $y_2$ given the variables $\{z_1, z_2,\hdots, z_{p-2}\}$ equals:

\begin{equation}\label{pcor}
\frac{\lambda_{y_1}\lambda_{y_2}- \lambda_{y_1}\lambda_{y_2}\textbf{z}^T\Sigma_{ZZ}^{-1}\textbf{z}}{\sqrt{ 1-\lambda_{y_1}^2\textbf{z}^T\Sigma_{ZZ}^{-1}\textbf{z}}\sqrt{ 1-\lambda_{y_2}^2\textbf{z}^T\Sigma_{ZZ}^{-1}\textbf{z}}}
\end{equation}
Note that in the simple case where $p=3$, Z consists of one variable, $\Sigma_{ZZ}^{-1}=1$ and thus $\textbf{z}^T\Sigma_{ZZ}^{-1}\textbf{z}$ equals $\lambda_z^2$ resulting in equation (\ref{pcor3}).

To prove that the partial correlation is always weaker than the zero-order correlation we first prove that for a positive correlation between $y_1$ and $y_2$ the partial correlation is smaller than the zero-order correlation. To do so, we have to prove that for positive correlations the following inequality results in a contradiction:

\begin{equation}\label{positive}
\frac{\lambda_{y_1}\lambda_{y_2}- \lambda_{y_1}\lambda_{y_2}\textbf{z}^T\Sigma_{ZZ}^{-1}\textbf{z}}{\sqrt{ 1-\lambda_{y_1}^2\textbf{z}^T\Sigma_{ZZ}^{-1}\textbf{z}}\sqrt{ 1-\lambda_{y_2}^2\textbf{z}^T\Sigma_{ZZ}^{-1}\textbf{z}}} \geq \lambda_{y_1}\lambda_{y_2} > 0
\end{equation}

This can be proved by showing that $0<\textbf{z}^T\Sigma_{ZZ}^{-1}\textbf{z}<1$. First assume that $0<\textbf{z}^T\Sigma_{ZZ}^{-1}\textbf{z}<1$. If $\textbf{z}^T\Sigma_{ZZ}^{-1}\textbf{z}$ is a number between zero and one, then the numerator of the above fraction will always be between zero and the correlation (i.e., $\lambda_{y_1}\lambda_{y_2}$). Thus, for a positive correlation the numerator will be positive. As a result in the equation above we can take out $\lambda^2_{y_1}$ and $\lambda^2_{y_2}$ from the square root in the denominator of the fraction and change the $\geq$ sign in a $>$ sign. After all, for a positive correlation, taking out $\lambda^2_{y_1}$ and $\lambda^2_{y_2}$ makes the total fraction larger. Then we are left with: 
\begin{equation}
\frac{\lambda_{y_1}\lambda_{y_2}(1-\textbf{z}^T\Sigma_{ZZ}^{-1}\textbf{z})}{ 1-\textbf{z}^T\Sigma_{ZZ}^{-1}\textbf{z}}=\lambda_{y_1}\lambda_{y_2}>\lambda_{y_1}\lambda_{y_2}> 0
\end{equation}
This is a contradiction; $\lambda_{y_1}\lambda_{y_2}$ is not larger than $\lambda_{y_1}\lambda_{y_2}$, and so we conclude that the partial correlation is strictly smaller than the zero-order correlation for positive correlations. 

Now we show that for negative correlations the partial correlation is always larger than the zero-order correlation. To do so, we have to prove that for a negative correlation between $y_1$ and $y_2$ the following inequality results in a contradiction:

\begin{equation}\label{negative}
\frac{\lambda_{y_1}\lambda_{y_2}- \lambda_{y_1}\lambda_{y_2}\textbf{z}^T\Sigma_{ZZ}^{-1}\textbf{z}}{\sqrt{ 1-\lambda_{y_1}^2\textbf{z}^T\Sigma_{ZZ}^{-1}\textbf{z}}\sqrt{ 1-\lambda_{y_2}^2\textbf{z}^T\Sigma_{ZZ}^{-1}\textbf{z}}} \leq \lambda_{y_1}\lambda_{y_2} < 0
\end{equation}
Again this can be proved by showing that $0<\textbf{z}^T\Sigma_{ZZ}^{-1}\textbf{z}<1$. First assume that $0<\textbf{z}^T\Sigma_{ZZ}^{-1}\textbf{z}<1$. If $\textbf{z}^T\Sigma_{ZZ}^{-1}\textbf{z}$ is a number between zero and one, then the numerator of the above fraction will always be between zero and the correlation (i.e., $\lambda_{y_1}\lambda_{y_2}$). Thus, for a negative correlation the numerator will be negative. As a result in the equation above we can take out $\lambda^2_{y_1}$ and $\lambda^2_{y_2}$ from the square root in the denominator of the fraction and change the $\leq$ sign in a $<$ sign. After all, for a negative correlation, taking out $\lambda^2_{y_1}$ and $\lambda^2_{y_2}$ makes the total fraction smaller. Then we are left with: 
\begin{equation}
\frac{\lambda_{y_1}\lambda_{y_2}(1-\textbf{z}^T\Sigma_{ZZ}^{-1}\textbf{z})}{ 1-\textbf{z}^T\Sigma_{ZZ}^{-1}\textbf{z}}=\lambda_{y_1}\lambda_{y_2}<\lambda_{y_1}\lambda_{y_2}< 0
\end{equation}
This is a contradiction; $\lambda_{y_1}\lambda_{y_2}$ is not smaller than $\lambda_{y_1}\lambda_{y_2}$. We thus have proven that if $0<\textbf{z}^T\Sigma_{ZZ}^{-1}\textbf{z}<1$ then both for cases in which the zero-order correlation between $y_1$ and $y_2$ is positive and for cases in which the zero-order correlation between $y_1$ and $y_2$ is negative, the partial correlation between $y_1$ and $y_2$ is closer to zero than the zero-order correlation between $y_1$ and $y_2$.

Now we prove that $0<\textbf{z}^T\Sigma_{ZZ}^{-1}\textbf{z}<1$. Note that $\Sigma_{ZZ} = \mathbf{zz}^T + A$, so that:

\begin{equation}
\textbf{z}^T\Sigma_{ZZ}^{-1}\textbf{z} = \textbf{z}^T[\textbf{z}\textbf{z}^T + A]^{-1}\textbf{z}.
\end{equation}
Note that since $A$ is the residual covariance matrix of $Z$, $A$ is a diagonal nondegenerate matrix. 
The Sherman-Morrison formula \citep{sherman1950adjustment} gives us:
\begin{equation}
\textbf{z}^T[\textbf{z}\textbf{z}^T + A]^{-1}\textbf{z}=\textbf{z}^T\Big[A^{-1}-\frac{A^{-1}\textbf{z}\textbf{z}^TA^{-1}}{1+\textbf{z}^TA^{-1}\textbf{z}}\Big]\textbf{z}
\end{equation}
This equals:
\begin{equation}
\textbf{z}^TA^{-1}\textbf{z}-\frac{(\textbf{z}^TA^{-1}\textbf{z})^2}{1+\textbf{z}^TA^{-1}\textbf{z}}=\frac{\textbf{z}^TA^{-1}\textbf{z}}{1+\textbf{z}^TA^{-1}\textbf{z}}<1
\end{equation}
We assumed that $\Sigma$ is positive definite, thus $\Sigma_{ZZ}$ is positive definite, and thus $\Sigma_{ZZ}^{-1}$ is positive definite as well. Therefore by definition $0<\textbf{z}^T\Sigma_{ZZ}^{-1}\textbf{z}$.
As a consequence,  $0<\textbf{z}^T\Sigma_{ZZ}^{-1}\textbf{z}<1$ and as such, both equation (\ref{positive}) and equation (\ref{negative}) result in a contradiction. This proves that in a  unidimensional factor model the partial correlation is always weaker than the zero-order correlation.
\end{proof}
In this paper we proved that the unidimensional factor model implies that partial correlations are always weaker than their corresponding zero-order correlations. Additionally the proof offers a relatively simple way to calculate the model implied partial correlation between two variables that load on a factor, given all other variables that load on this factor. One way to obtain this partial correlation is by standardizing the inverse of the model implied covariance matrix ($\hat{\Sigma}^{-1}$) and multiply the off-diagonal elements with -1. That is, the model implied partial correlations are a function of the model implied precision matrix, $\hat{P} = \hat{\Sigma}^{-1}$ \citep{whittaker2009graphical}:  
\begin{equation} \label{eq:partialcor}
\hat{\rho}_{ij \LargerCdot Z } =  - \frac{\hat{p}_{ij}}{\sqrt{\hat{p}_{ii}\hat{p}_{jj}}}
\end{equation}
In this paper we showed that if the model is a unidimensional factor model, the model implied partial correlation between two variables loading on this factor equals:
\begin{equation}
\hat{\rho}_{ij \LargerCdot Z } =  \frac{\lambda_{y_1}\lambda_{y_2}-\lambda_{y_1}\lambda_{y_2}K}{\sqrt{1-\lambda^{2}_{y_1}K}\sqrt{1-\lambda^{2}_{y_2}K}}
\end{equation}
In which $K$ is a single number between zero and one, and is a function of the factor loadings of the variables that are partialled out:
\begin{equation}
K=\frac{\textbf{z}^TA^{-1}\textbf{z}}{1+\textbf{z}^TA^{-1}\textbf{z}}
= \frac{\sum z_i^2/a_{ii}}{1 + \sum z_i^2/a_{ii}}
\end{equation}
To obtain $K$ we still need to calculate the inverse of the matrix $A$, but since $A$ is a diagonal matrix this is more easily calculated than the inverse of $\Sigma$ which is needed for equation (\ref{eq:partialcor}).

\section*{Acknowledgements}
We would like to thank Mijke Rhemtulla and Denny Borsboom for their help in constructing the theory that later resulted in this proof. We would like to thank Sacha Epskamp for his helpful comments.

\bibliography{references}

\begin{thebibliography}{}

\bibitem[Bollen and Ting, 1993]{bollen1993confirmatory}
Bollen, K.~A. and Ting, K. (1993).
\newblock Confirmatory tetrad analysis.
\newblock {\em Sociological methodology}, pages 147--175.

\bibitem[Chen and Pearl, 2014]{chen2014graphical}
Chen, B. and Pearl, J. (2014).
\newblock Graphical tools for linear structural equation modeling. {Technical
  Report R-432, <http://ftp.cs.ucla.edu/pub/stat ser/r432.pdf>, Department of
  Computer Science, University of California, Los Angeles, CA.}
\newblock {\em Psychometrika}, page forthcoming.

\bibitem[Holland and Rosenbaum, 1986]{holland1986conditional}
Holland, P.~W. and Rosenbaum, P.~R. (1986).
\newblock Conditional association and unidimensionality in monotone latent
  variable models.
\newblock {\em The Annals of Statistics}, 14:1523--1543.

\bibitem[Johnson and Wichern, 1998]{johnson1998applied}
Johnson, R.~A. and Wichern, D.~W. (1998).
\newblock {\em Applied multivariate statistical analysis}, volume~5.
\newblock Prentice hall Upper Saddle River, NJ.

\bibitem[Langford et~al., 2001]{langford2001property}
Langford, E., Schwertman, N., and Owens, M. (2001).
\newblock Is the property of being positively correlated transitive?
\newblock {\em The American Statistician}, 55(4):322--325.

\bibitem[Lord, 1980]{lord1980applications}
Lord, F.~M. (1980).
\newblock {\em Applications of item response theory to practical testing
  problems}.
\newblock Hillsdale, NJ: Erlbaum.

\bibitem[Sherman and Morrison, 1950]{sherman1950adjustment}
Sherman, J. and Morrison, W.~J. (1950).
\newblock Adjustment of an inverse matrix corresponding to a change in one
  element of a given matrix.
\newblock {\em The Annals of Mathematical Statistics}, 21(1):124--127.

\bibitem[Whittaker, 1990]{whittaker2009graphical}
Whittaker, J. (1990).
\newblock {\em Graphical models in applied multivariate statistics}.
\newblock New York, NY: Wiley.

\bibitem[Zhang, 2005]{zhang2006schur}
Zhang, F. (2005).
\newblock {\em The Schur complement and its applications}, volume~4.
\newblock New York, NY: Springer Science \& Business Media.

\end{thebibliography}
\bibliographystyle{apalike}
\end{document}